\newcommand{\be}{\begin{eqnarray}}
\newcommand{\ee}{\end{eqnarray}}
\newcommand{\beno}{\begin{eqnarray*}}
\newcommand{\eeno}{\end{eqnarray*}}
\newcommand{\e}{{\varepsilon}}
\newcommand{\om}{\omega}
\newcommand{\supp}{\operatorname{supp}}\newtheorem{theorem}{Theorem}\newtheorem{lemma}[theorem]{Lemma}\theoremstyle{definition}\theoremstyle{remark}\numberwithin{equation}{section}\input epsf.sty
\begin{document}\thispagestyle{empty}
\newcommand{\diag}{\operatorname{diag}}
%
%
%
%
%
%
%
%
%
%
%
%
\newcommand{\G}{{\mathcal G}}
\newcommand{\C}{{\mathbb C}}
\newcommand{\Pa}{{P_{1,\theta}(y)}}
\newcommand{\Pb}{{P_{2,\theta}(y)}}
\newcommand{\Pshp}{{P_{1,t}^\sharp(x)}}
\newcommand{\Pflt}{{P_{1,t}^\flat(x)}}
\newcommand{\OTL}{{\Omega(\theta,\ell)}}
\newcommand{\rsz}{{R(\theta^*)}}
\newcommand{\phitil}{{\tilde{\varphi}_t}}
\newcommand{\lambdatil}{{\tilde{\lambda}}}
\newcommand{\al}{\alpha}
\newcommand{\ftpI}{\langle (g^2 + \tau^2 f^2)^{\frac{p}{2}} \rangle_{I}}
\newcommand{\fI}{\langle f \rangle_{I}}
\newcommand{\fpI}{\langle |f|^p \rangle_{I}}
\newcommand{\gI}{\langle g \rangle_{I}}
\newcommand{\mtp}{|(f,h_J)| = |(g,h_J)|}
\newcommand{\fJp}{\langle f \rangle_{J^+}}
\newcommand{\fJm}{\langle f \rangle_{J^-}}
\newcommand{\gJp}{\langle f \rangle_{J^+}}
\newcommand{\gJm}{\langle f \rangle_{J^-}}
\newcommand{\fIpm}{\langle f \rangle_{J^{\pm}}}
\newcommand{\gIpm}{\langle g \rangle_{J^{\pm}}}
\newcommand{\fIp}{\langle f \rangle_{I^{+}}}
\newcommand{\fIm}{\langle f \rangle_{I^{-}}}
\newcommand{\gIp}{\langle g \rangle_{I^{+}}}
\newcommand{\gIm}{\langle g \rangle_{I^{-}}}
\newcommand{\yone}{y_1}
\newcommand{\ytwo}{y_2}
\newcommand{\yonep}{y_1^+}
\newcommand{\ytwop}{y_2^+}
\newcommand{\yonem}{y_1^-}
\newcommand{\ytwom}{y_2^-}
\newcommand{\ythr}{y_3}

\newcommand{\xone}{x_1}
\newcommand{\xtwo}{x_2}
\newcommand{\xonep}{x_1^+}
\newcommand{\xtwop}{x_2^+}
\newcommand{\xonem}{x_1^-}
\newcommand{\xtwom}{x_2^-}
\newcommand{\xthr}{x_3}
\newcommand{\ptwo}{\frac{p}{2}}
\newcommand{\alp}{\al^+}
\newcommand{\alm}{\al^-}
\newcommand{\alpm}{\al^{\pm}}
\newcommand{\twrp}{\frac{2}{p}}
\newcommand{\sbeta}{\sqrt{\om^2 - \tau^2}}
\newcommand{\ssbeta}{\om^2 - \tau^2}
\newcommand{\sign}{\operatorname{sign}}
\newcommand{\rst}[1]{\ensuremath{{\mathbin\upharpoonright}%
\newcommand{\and}{\operatorname{and}}

\raise-.5ex\hbox{$#1$}}}

\title[]{{Astala's conjecture from the point of view of singular integrals on metric spaces}}
\author{Alexander Volberg}
\footnote{Supported by NSF grant DMS 0758552; AMS subject classifications: 30C62, 35J15,  35J70, 42B20, 42B35}
\begin{abstract}
In the proof of Astala's conjecture on quasiconformal distortion obtained by Lacey--Sawyer--Uriarte-Tuero one of the key point
is an estimate of the Ahlfors--Beurling operator in a certain weighted space. We show that the point of view of non-homogeneous Harmonic Analysis simplifies considerably this key point.
\end{abstract}
\maketitle

\section{Introduction}

Let us explain the setting. Let $\phi$ be a $K$-quasiconformal (QC) mapping, $0<t<2$. Astala \cite{A} proved the following celebrated
$$
\dim (E) =t \Rightarrow \dim (\phi(E)) \le t^{'}\,,
$$
where
\begin{equation}
\label{t}
\frac{1}{t^{'}} -\frac12 =\frac1{K} \bigg( \frac1{t}-\frac12\bigg)\,.
\end{equation}

He asked whether for the borderline distortion one should have the absolute continuity of corresponding Hausdorff measures:
\begin{equation}
\label{A}
\mathcal{H}^t (E)=0\Rightarrow \mathcal{H}^{t^{'}}(E) =0\,?
\end{equation}

The answer is ``yes", it was proved in \cite{LSUT}, and one of the key point was the following theorem about weighted estimate of singular integral operators (SIO).

\begin{theorem}
\label{main}
Let $0<d<2$, and $\{Q_m\}_{m=1}^{M}$ is a collection of dyadic squares on the plane such that it satisfy strong {\it disjointness condition}
\begin{equation}
\label{D}
4 Q_m\cap 4 Q_{m'} =\emptyset\,,\,\forall m\neq m'\,,
\end{equation}
and  {\it  a packing condition}, which says that for any dyadic square $Q$ of the same lattice
\begin{equation}
\label{P}
\sum_{Q_m\subset Q} \ell(Q_m)^{2-d} \le C\, \ell(Q)^{2-d}\,.
\end{equation}
Consider measure $\mu:= \sum_{m=1}^M \ell(Q_m)^{-d}\cdot m_2| Q_m$ and the kernel $t(x,y):= (x-y)^{-2}, x, y\in  \cup_{m=1}^M Q_m$. 
Then the operator with the kernel $t(x,y)$ is bounded from $L^2(\mu)$  to $L^2(\mu)$ in the sense that ($X:=\cup_{m=1}^M Q_m$)
\begin{equation}
\label{mainineq}
\int_X\bigg | \int_X \frac{f(y)}{(x-y)^2 }\,dm_2(y)\bigg|\,d\mu(x) \le C_1\, \int_X |f(x)|^2\, d\mu(x)\,.
\end{equation}
\end{theorem}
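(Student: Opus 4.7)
The plan is to treat $T$ as a non-homogeneous Calder\'on--Zygmund operator on the metric measure space $(X,|\cdot-\cdot|,\mu)$, decompose its action according to the cellular structure of $X$ into the cubes $Q_m$, and bound the resulting diagonal (within-cube) and off-diagonal (between-cube) pieces separately. The diagonal piece is controlled by the classical $L^2(m_2)$-boundedness of the Ahlfors--Beurling transform $B$; the off-diagonal piece is the substantive estimate, and the packing condition (P) is what makes it work. In the NTV framework the off-diagonal estimate is packaged as a $T1$ testing condition on dyadic cubes, with (P) playing the role that the doubling hypothesis plays in the classical theory.

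Write $f=\sum_m f_m$, $f_m:=f\chi_{Q_m}$, and split $Tf(x)=T_{\rm diag}f(x)+T_{\rm off}f(x)$ on each $Q_k$, where $T_{\rm diag}f(x):=\int_{Q_k}f_k(y)/(x-y)^2\,dm_2(y)$ keeps only the contribution from the cube containing $x$. On $Q_k$, $d\mu=\ell(Q_k)^{-d}\,dm_2$, so $L^2(\mu|_{Q_k})$ and $L^2(m_2|_{Q_k})$ differ by a uniform scalar on both sides, and the $L^2(m_2)$-boundedness of $B$ immediately gives
$$\|T_{\rm diag}f\|_{L^2(\mu)}^2=\sum_k\ell(Q_k)^{-d}\|Bf_k\|_{L^2(m_2|_{Q_k})}^2\le C\sum_k\ell(Q_k)^{-d}\|f_k\|_{L^2(m_2|_{Q_k})}^2=C\|f\|_{L^2(\mu)}^2.$$

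For $T_{\rm off}$, condition (D) forces $|x-y|\asymp\dist(Q_k,Q_m)\gtrsim\max(\ell(Q_k),\ell(Q_m))$ for $x\in Q_k$, $y\in Q_m$, $m\ne k$, so Cauchy--Schwarz on the inner integral (using $|Q_m|^{1/2}=\ell(Q_m)$ and $\|f_m\|_{L^2(m_2)}=\ell(Q_m)^{d/2}\|f_m\|_{L^2(\mu)}$) yields a pointwise (and essentially constant in $x$) bound
$$|T_{\rm off}f(x)|\lesssim\sum_{m\ne k}\frac{\ell(Q_m)^{1+d/2}}{\dist(Q_k,Q_m)^2}\,\|f_m\|_{L^2(\mu)},\qquad x\in Q_k.$$
Setting $x_m:=\|f_m\|_{L^2(\mu)}$, the required bound $\|T_{\rm off}f\|_{L^2(\mu)}\lesssim\|f\|_{L^2(\mu)}$ reduces to the discrete bilinear inequality
$$\sum_k\ell(Q_k)^{2-d}\Big(\sum_{m\ne k}\frac{\ell(Q_m)^{1+d/2}}{\dist(Q_k,Q_m)^2}\,x_m\Big)^{\!2}\le C\sum_m x_m^2,$$
which I would prove by slicing the inner sum into dyadic annuli $\{\dist(Q_k,Q_m)\asymp R\}$ around $Q_k$ and applying the packing estimate $\sum_{Q_m\text{ in annulus}}\ell(Q_m)^{2-d}\lesssim R^{2-d}$ at each scale, followed by a Schur/Cotlar-type bound on the resulting geometrically decaying matrix.

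The main obstacle is exactly this off-diagonal bilinear inequality. The kernel $1/(x-y)^2$ has homogeneity $2$ while $\mu$ carries only $(2-d)$-dimensional growth, so the operator is formally \emph{too singular} for the direct non-homogeneous Calder\'on--Zygmund machinery to apply out of the box; only the packing condition (P) supplies the summability needed to control the dyadic-annulus tails. Conceptually the cleanest packaging is as the non-homogeneous $T1$ testing $\|T\chi_{Q\cap X}\|_{L^2(\mu)}^2\lesssim\mu(Q)$ on arbitrary dyadic cubes $Q$, which the same computation establishes and which slots directly into the NTV $T1$ theorem with (P) playing the role of doubling.
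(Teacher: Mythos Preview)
Your diagonal/off-diagonal split and your treatment of the diagonal piece are exactly what the paper does. The divergence is in the off-diagonal piece, and there your sketch has a real gap.

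After Cauchy--Schwarz you reduce to the $\ell^2\to\ell^2$ boundedness of the matrix
\[
A_{km}=\frac{\ell(Q_k)^{1-d/2}\,\ell(Q_m)^{1+d/2}}{\dist(Q_k,Q_m)^2},
\]
and propose to handle it by ``annuli $+$ packing $+$ Schur''. But packing controls $\sum_{m\in\text{annulus}}\ell(Q_m)^{2-d}$, whereas your row sums need $\sum_{m\in\text{annulus}}\ell(Q_m)^{1+d/2}$; these exponents match only at $d=2/3$. If you try Schur with weights $p_m=\ell(Q_m)^\alpha$, the row condition forces $\alpha<1-d/2$ while the column condition forces $\alpha\ge 1-d/2$, so no power weight works. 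The bilinear inequality is in fact true (it is equivalent, for cube-constant $f$, to the boundedness you are trying to prove), but your outlined mechanism does not close, and a ``geometrically decaying matrix'' does not drop out of the annulus slicing in the way you suggest.

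The paper avoids this entirely. It first absorbs the density into the kernel, writing the off-diagonal operator as $\int K(x,y)f(y)\,d\mu(y)$ with $K(x,y)=\ell(Q_i)^d/(x-y)^2$ for $y\in Q_i$, and then---this is the key move you are missing---passes to the \emph{adjoint}, whose kernel is $K'(x,y)=\ell(Q_j)^d/(x-y)^2$ for $x\in Q_j$. Now the factor $\ell(Q_j)^d$ sits on the $x$-side, and a dyadic annulus decomposition around $x$ together with \eqref{P} gives the pointwise bound
\[
|T'_\mu f(x)|\lesssim \sum_{a\ge 2}2^{-ad}\,\frac{1}{\mu(B(x,3R_a))}\int_{B(x,R_a)}|f|\,d\mu\ \lesssim\ M_{\mu,3}f(x),
\]
and $M_{\mu,3}$ is bounded on $L^2(\mu)$ for any $\mu$ by Vitali. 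No Schur test, no $T1$ machinery.

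Your closing remark about plugging into the NTV $T1$ theorem also skips a step: the raw kernel $(x-y)^{-2}$ has order $2$, while $\mu$ has growth $2-d$, so the Calder\'on--Zygmund size condition $|K|\lesssim |x-y|^{-(2-d)}$ fails. The paper fixes this precisely by the kernel modification above (and then observes that once this is done, one does not even need $T1$).
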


\noindent{\bf Remarks.} 1) The proof of this result in \cite{LSUT} is not too easy, and its mechanism is not obvious. Our goal in the present note is to clarify this mechanism.

2) We can write $d\mu= w\, dm_2$ with the obvious $w(x): =\sum_{m=1}^M  \ell(Q_m)^{-d}\cdot  \chi_{Q_m}$. It is very easy to check that
\begin{equation}
\label{A2}
\frac1{|B|}\int_{B\cap X} w\,dm_2 \cdot \frac1{|B|}\int_{B\cap X} w^{-1}\, d m_2 \le C_3
\end{equation}
independently of the disc $B$ on the plane. This, of course, suggests that the theorem above is a particular case of a classical Hunt--Muckenhoupt--Wheeden weighted $A_2$ result.  We warn the reader: this is {\bf not so at all}. A very ``tiny" difference is in integration over $B\cap X$ in \eqref{A2}.
It cannot be replaced by the integration over $B$, which practically always equal to infinity for $w^{-1}$. This tiny difference is crucial. It gives the idea that we need to stick to $X$, namely,  it seems at the first glance that we need the  Hunt--Muckenhoupt--Wheeden weighted $A_2$ result on {\it metric space} $X$.  However, at this moment there is no necessary and sufficient condition of the weighted type for singular integrals operators (SIO) on metric spaces. This problem is very close to a notoriously difficult two-weight problem for singular integrals. The difficulty is that we come into the realm of {\it non-homogeneous} Harmonic Analysis: meaning that neither $m_2|X$, nor $\mu|X$ is a doubling measure. There are partial $A_2$ type results in such situations, but only partial, see \cite{NRV} for example.

3) This explains the subtlety of the proof in \cite{LSUT}.

4) However, we show below that the point of view of non-homogeneous Harmonic Analysis on metric spaces is fruitful and allows us to simplify the proof of Theorem \ref{main}. But strangely enough, we need to look at it as {\bf unweighted} non-homogeneous $T1$ theorem (in fact, it turns out that the mechanism is even simpler, but for that the reader should look at Section \ref{simple}).

\section{A simple proof of Theorem \ref{main}. The weighted estimate of Ahlfors--Beurling transform = unweighted estimate of a certain non-symmetric Calder\'on--Zygmund operator on a metric space.}
\label{simple}
 
 Let 
 $$
 \mu=\sum_i \ell(Q_i)^{-d} m_2|Q_i\,.
 $$

Let us consider
$$
K(x,y):=\begin{cases} 0, \,\text{if}\,\, x, y \in \text{the same}\,\, Q_i, i=1,\dots, M\,;\\
\frac{\ell(Q_i)^d}{(x-y)^2}\,,\,\text{if}\,\, y\in  Q_i, x\in Q_j, i\neq j\end{cases}
$$
It is obvious that the boundedness of this operator in $L^2(\mu)$ is exactly equivalent to proving Theorem \ref{main}.

We notice that it is enough to prove the boundedness for the formal adjoint operator $T'_{\mu}$, whose kernel is 
$$
K'(x,y):=\begin{cases} 0, \,\text{if}\,\, x, y \in \text{the same}\,\, Q_i, i=1,\dots, M\,;\\
\frac{\ell(Q_j)^d}{(x-y)^2}\,,\,\text{if}\,\, y\in  Q_i, x\in Q_j, i\neq j\,.\end{cases}
$$

Consider any metric space  with any measure (we do not need even $(X,d)$ to be a geometrically doubling metric space), and let
$$
M_{\mu,3} f(x):=\sup_{R>0} \frac1{\mu(B(x, 3R)} \int_{B(x,R)} |f(y)|\,d\mu(y)\,.
$$
This maximal operator was widely used in \cite{NTV}. It is an immediate consequence of Vitali's covering lemma that it is bounded in $L^2(\mu)$, no matter what is $\mu$.

Now we have Theorem \ref{main} as an immediate corollary of

\begin{lemma}
\label{bd}
The operator $f\rightarrow \int_X K'(x,y) f(y)\, d\mu(y)$ is pointwise majorized by $C\,M_{\mu,3} f(x)$.
\end{lemma}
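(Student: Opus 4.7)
\textbf{Proof plan for Lemma \ref{bd}.} Fix $x \in Q_j$. Since $d\mu = \ell(Q_i)^{-d}\,dm_2$ on each $Q_i$, the factor $\ell(Q_j)^d$ in the numerator of $K'(x,y)$ combines cleanly with $d\mu(y)$ to give
\[
\int_X K'(x,y)\,f(y)\,d\mu(y) \;=\; \ell(Q_j)^d \int_{X\setminus Q_j}\frac{f(y)}{(x-y)^2}\,d\mu(y).
\]
The strong disjointness condition \eqref{D} guarantees that if $y\in Q_i$ with $i\neq j$, then $y\notin 4Q_j$ (and similarly $x\notin 4Q_i$), hence $|x-y|\gtrsim \ell(Q_j)$. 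I would therefore decompose $X\setminus Q_j$ into dyadic annuli $A_k := \{y:2^{k-1}<|x-y|\le 2^k\}$ for $k\ge k_0$, where $2^{k_0}\sim \ell(Q_j)$. On $A_k$ the kernel satisfies $|x-y|^{-2}\lesssim 2^{-2k}$, and the defining inequality of $M_{\mu,3}$ gives $\int_{B(x,2^k)} |f|\,d\mu \le \mu(B(x,3\cdot 2^k))\,M_{\mu,3}f(x)$. Hence,
\[
\Bigl|\int_{X\setminus Q_j}\frac{f(y)}{(x-y)^2}\,d\mu(y)\Bigr| \;\lesssim\; M_{\mu,3}f(x)\sum_{k\ge k_0} 2^{-2k}\,\mu(B(x,3\cdot 2^k)).
\]

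The crux of the proof is the $(2-d)$-dimensional growth estimate
\[
\mu(B(x,R)) \;\lesssim\; R^{2-d}\qquad\text{uniformly in $x$ and $R>0$.}
\]
To establish it, I split the squares $Q_i$ meeting $B:=B(x,R)$ into those with $\ell(Q_i)\le R$ and those with $\ell(Q_i)> R$. The small squares all lie in a fixed dilate of $B$, which is covered by $O(1)$ dyadic squares of side comparable to $R$; the packing hypothesis \eqref{P} applied to each cover square bounds the contribution by $\sum \ell(Q_i)^{2-d}\lesssim R^{2-d}$. For the large squares, the disjointness of the dilates $\{4Q_i\}$ forces only $O(1)$ of them at each dyadic scale $\ell(Q_i)\sim 2^\ell R$ to meet $B$, each contributing $\ell(Q_i)^{-d}\,m_2(Q_i\cap B)\lesssim 2^{-\ell d}R^{2-d}$; summing over $\ell\ge 0$ still yields $O(R^{2-d})$.

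Plugging $\mu(B(x,3\cdot 2^k))\lesssim 2^{k(2-d)}$ into the annular sum gives
\[
\sum_{k\ge k_0} 2^{-2k}\cdot 2^{k(2-d)} \;=\; \sum_{k\ge k_0} 2^{-kd} \;\lesssim\; 2^{-k_0d} \;\sim\; \ell(Q_j)^{-d},
\]
which exactly cancels the prefactor $\ell(Q_j)^d$ and leaves $M_{\mu,3}f(x)$, as claimed. The main obstacle is the $(2-d)$-growth bound for $\mu$: condition \eqref{P} is stated only for dyadic squares of the fixed lattice, so it cannot be applied directly to a ball, and both \eqref{P} and the disjointness \eqref{D} are essential---the latter precisely to handle the ``large'' squares whose diameters outrun $R$. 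Once the pointwise lemma is proved, the $L^2(\mu)$-boundedness of $M_{\mu,3}$ (via Vitali, as noted in the text) immediately delivers \eqref{mainineq}, finishing Theorem \ref{main}.
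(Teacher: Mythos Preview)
Your argument is correct and follows essentially the same route as the paper: fix $x\in Q_j$, pull out the constant $\ell(Q_j)^d$, decompose the remaining integral into dyadic annuli around $x$, and close with the growth bound $\mu(B(x,R))\lesssim R^{2-d}$ coming from the packing condition. The paper organizes the annuli by which dilate $2^{a+1}Q_j\setminus 2^aQ_j$ contains the center $c_i$ rather than by $|x-y|$, but this is the same decomposition up to constants; your treatment of the growth estimate is in fact more careful than the paper's one-line appeal to \eqref{P}, since you explicitly dispose of the ``large'' squares $\ell(Q_i)>R$ via \eqref{D} (and indeed \eqref{D} forces at most \emph{one} such square to meet $B(x,R)$, as $x\in 4Q_i$ whenever $Q_i$ meets $B$ and $\ell(Q_i)>R$).
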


\begin{proof} Let $f\ge 0$. Fix $Q_j$, its center $c_j$, $x\in Q_j$, and let us estimate
$$
|T'_{\mu}f(x)| \le \int_X |K'(x,y)| f(y)\,d\mu(y)\,.
$$
Consider the centers $c_i$ of squares $Q_i$ such that
$c_i\in 2^{a+1} Q_j\setminus 2^a Q_j$. We call this family of centers (and their squares too) family $F_a$. Notice that $F_a$ can be non-empty only if  $a\ge 2$ (see \eqref{D}).

If $x\in Q_j, y\in Q_i, Q_i\in F_a$ then
$$
|K'(x,y)|\le \frac{\ell_j^d}{|c_j-c_i|^2}\,,\text{where}\,\, \ell_j:=\ell(Q_j)\,.
$$
Therefore,
$$
|T'_{\mu} f(x)|\le \sum_{a=2}^{\infty} \bigg( \sum_{c_i\in F_a} \frac1{|c_i-c_j|^2} \int_{Q_i} f\,d\mu\bigg) \ell_j^d\le
$$
$$
\sum_{a=2}^{\infty}2^{-ad} \frac{(2^a \ell_j)^d}{(2^a\ell_j)^2} \int_{\cup_{Q_i\in F_a}Q_i} f\,d\mu \le
$$
$$
\sum_{a=2}^{\infty}2^{-ad} \frac{1}{(2^a\ell_j)^{2-d}}\int_{B(x, 8\cdot 2^{a+1}\ell_j)} f\,d\mu=: I\,.
$$
The last inequality holds because each square from the family $F_a$ lie in the disc centered at $x\in Q_j$ of radius $R_a:= 8\cdot 2^{a+1}\ell_j$.
This is again obvious from \eqref{D}. We continue:
$$
I\le C_1 \sum_{a=2}^{\infty} 2^{-ad}  \frac{1}{(24\cdot 2^{a+1}\ell_j)^{2-d}}\int_{B(x, 8\cdot 2^{a+1}\ell_j)} f\,d\mu \le
$$
$$
C_2 \sum_{a=2}^{\infty} 2^{-ad}  \frac{1}{\mu(B(x, 3R_a))}\int_{B(x, R_a)} f\,d\mu\le C_3 \, M_{\mu,3}f(x)\,.
$$

The last inequality is because
\begin{equation}
\label{meest}
\mu(B(x, 24\cdot 2^{a+1}\ell_j)) \le C_4\, (24\cdot 2^{a+1}\ell_j)^{2-d}\,.
\end{equation}
In its turn this is easy: the disc $B(x, 24\cdot 2^{a+1}\ell_j)$ is covered by a fixed number of dyadic squares of comparable size. Apply the packing condition \eqref{P} to each of these dyadic squares, and see  \eqref{meest} immediately. Theorem \ref{main} is proved.
\end{proof}

\section{$T1$ theorem for non-homogeneous metric measure spaces}
\label{two}

The current section is not needed for the proof of Theorem \ref{main} because the proof has been already given in the previous section. 

Still we decided to include it here to explain the connection to other related questions. Also this section serves as a conceptual explanation of what is going on in Theorem \ref{main}. For the experts we want to emphasize  a ceratin affinity to the problem considered in Volberg--Wick's paper \cite{VW}. But the situation in \cite{VW} is actually more singular so-to-speak.

Let us  work in this section only with so-called geometrically doubling metric spaces (GDMS) $(X,d)$ meaning that each ball of radius $r$ can be covered by at most fixed number of balls of radius $r/2$. This guarantees (by Konyagin--Volberg's theorem \cite{KV}) that $X$ is a metric  measure space with measure $\sigma$, $\supp\sigma =X$, such that $\sigma$ is a doubling measure ($(X,d, \sigma)$ is sometimes called ``homogeneous metric-measure space"). However,  we need a non-homogeneous metric-measure space with the same underlying $(X,d)$: namely, $(X,d, \mu)$, where $\mu$ is {\bf not necessarily} doubling. 

Given such $(X,d, \mu)$ with an extra condition
\begin{equation}
\label{ms}
\mu(B(x,r) \le r^s
\end{equation}
we can consider singular kernel of singularity $s$ of Calder\'on--Zygmund type on $X$. It is $K(x,y), x, y\in X$ such that

I) $ |K(x,y)| \le d(x,y)^{-s}$

II) $ |K(x, y)- K(x', y)| \le \frac{d(x,x')^{\e}}{d(x,y)^{s+\e}}$ for some $\e>0$ and all $x, x', y$ such that $d(x,x')\le \frac12 d(x,y)$.

III) $ |K(x, y)- K(x, y')| \le \frac{d(x,x')^{\e}}{d(x,y)^{s+\e}}$ for the same $\e>0$ and all $y, y', x$ such that $d(y,y')\le \frac12 d(x,y)$.

We can consider the (formal) operator $T_\mu$.
$$
f\in L^2(\mu) \rightarrow \int_X K(x, y) f(y) d\mu(y)\,.
$$
Given all this, we have the following theorem called non-homogeneous $T1$ theorem. It was proved by Nazarov--Treil--Volberg if $X$ is a Euclidean spaces, but the same proof can be adapted for GDMS (and actually this has been done by Hyt\"onen--Martikainen \cite{HyMa}). Notice that the kernel actually can be allowed to be considerably worse (bigger) depending on $\mu$ than we list in I), II), III). See, for example, \cite{HyMa}, \cite{NRV}.  But we do not need this here. 

In the next result we assume that formal operator $T_{\mu}$ and its formal adjoint $T'_{\mu}$ can be correctly defined on characteristic functions of balls. 

\begin{theorem}
\label{T1}
Operator $T_{\mu}$ with kernel $K$ is bounded (can be extended to be a bounded operator from finite linear combinations of characteristic functions of balls) in $L^2(\mu)$ if and only if

(i) $\|T_{\mu}\chi_B\|_{\mu}^2 \le C\,\mu(B)\,,$

(ii) $\|T_{\mu}\chi_B\|_{\mu}^2 \le C\,\mu(B)\,,$

\noindent where  $T'_{\mu}$ is a (formal) operator with kernel $K(y,x)$.

\end{theorem}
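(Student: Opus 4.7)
The plan is to follow the random dyadic lattice approach of Nazarov--Treil--Volberg, as adapted to the geometrically doubling setting by Hyt\"onen--Kairema and Hyt\"onen--Martikainen. The first step is to construct, for each parameter $\om$ in some probability space, a system $\D_\om$ of dyadic ``cubes'' on $X$: a family of nested partitions of $X$ at scales $\{2^{-k}\}_{k\in\Z}$ with the usual sandwiching by balls of comparable radius, and with the key quantitative boundary regularity
\[
\Pro_\om\{Q\in\D_\om\ni x\,:\, \dist(x,\partial Q)\le \delta\,\ell(Q)\} \le C\delta^\eta
\]
for some universal $\eta>0$. Associated to $\D_\om$ and $\mu$ there is a Haar-type orthonormal basis $\{h_Q^\al\}$ of the mean-zero part of $L^2(\mu)$, where each $h_Q^\al$ is supported in $Q$, constant on the children of $Q$, and has $\int h_Q^\al\, d\mu=0$. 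Setting $\Delta_Q f:=\sum_\al \langle f, h_Q^\al\rangle h_Q^\al$, one gets $\|f\|_{L^2(\mu)}^2 = \sum_Q \|\Delta_Q f\|^2$ up to the trivial constant part.

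The second step is the good/bad cube dichotomy. Using two independent random lattices $\D_{\om_1},\D_{\om_2}$, declare $Q\in\D_{\om_1}$ \emph{bad} if there exists $R\in\D_{\om_2}$ with $\ell(R)\ge 2^r\ell(Q)$ and $\dist(Q,\partial R)\le \ell(Q)^\gamma\ell(R)^{1-\gamma}$, for parameters $r$ large and $\gamma\in(0,1)$ close to $1$. The boundary regularity gives $\Pro_{\om_2}(Q\text{ bad})\le\e(r)\to 0$. Expanding
\[
\langle T_\mu f, g\rangle = \sum_{Q\in\D_{\om_1}}\sum_{R\in\D_{\om_2}} \langle T_\mu \Delta_Q f, \Delta_R g\rangle
\]
and averaging in $\om_1,\om_2$, the standard NTV absorption shows that the contribution of pairs in which either cube is bad is at most $\e(r)\,\|T_\mu\|\,\|f\|_{L^2(\mu)}\|g\|_{L^2(\mu)}$; choosing $r$ large and absorbing reduces the problem to summing only over pairs where both cubes are good in the other lattice.

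The third step is the estimate of the good bilinear form, which splits into three regimes. In the long-range regime ($Q\cap R=\es$, neither contained in the other) one uses the vanishing of $\int h_Q^\al\,d\mu$ together with the Calder\'on--Zygmund smoothness II), III) to get
\[
|\langle T_\mu h_Q^\al, h_R^\beta\rangle|\lesssim \frac{\min(\ell(Q),\ell(R))^\e}{D(Q,R)^{s+\e}}\sqrt{\mu(Q)\mu(R)},
\qquad D(Q,R):=\ell(Q)+\ell(R)+\dist(Q,R),
\]
and sums via Schur's test; the growth condition \eqref{ms} is what makes the sum converge, while the goodness of the cubes kills the would-be diagonal divergence. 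In the nested regime (say $R\subsetneq Q$ with $\ell(R)\le 2^{-r}\ell(Q)$), the function $h_Q^\al$ is constant on the child of $Q$ containing $R$, so one writes the matrix element as a paraproduct against $T_\mu\chi_Q$ and applies testing condition (i); the symmetric nested case uses (ii). Intermediate pairs ($\ell(Q)\sim\ell(R)$ and close) are $O(1)$ per cube and are controlled by testing on the nearest common ancestor.

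The main obstacle, and the reason the non-doubling character of $\mu$ makes this delicate, is the paraproduct step: when $\mu$ varies wildly between siblings, the Haar functions fail to satisfy a uniform bound $\|h_Q^\al\|_\infty\sqrt{\mu(Q)}\le C$, and the balancing of this deficiency against the $\ell(Q)^\gamma\ell(R)^{1-\gamma}$ separation furnished by goodness forces a tight choice of $\gamma$ and $r$; this is exactly where the GDMS version of \cite{HyMa} has to depart from the Euclidean NTV bookkeeping. Once this is settled, both the long-range Schur sum and the paraproduct close, yielding $\|T_\mu\|_{L^2(\mu)\to L^2(\mu)}\lesssim 1 + [T_\mu]_{\text{test}}$, with $[T_\mu]_{\text{test}}$ the best constant in (i), (ii). Necessity is trivial: testing on $\chi_B$ and noting that balls can be sandwiched between cubes of comparable $\mu$-mass from the random system gives (i) and, applied to the bounded adjoint, (ii).
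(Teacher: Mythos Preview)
The paper does not give its own proof of Theorem~\ref{T1}: it simply records the result and attributes it to Nazarov--Treil--Volberg in the Euclidean case and to Hyt\"onen--Martikainen \cite{HyMa} for geometrically doubling metric spaces. Your outline is a faithful sketch of precisely that NTV/Hyt\"onen--Martikainen random-dyadic argument (random cubes with small-boundary property, good/bad decomposition with probabilistic absorption, long-range Schur estimate via CZ smoothness and the growth bound~\eqref{ms}, and the nested paraproduct handled by the testing conditions), so you are following the same route the paper points to.
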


Let us look at Lacey--Sawyer--Uriarte-Tuero's theorem \ref{main} from the point of view of this theorem.  The metric space is $X=\cup_{m=1}^M Q_m$, the metric $d$ is the usual Euclidean metric, $\mu= \sum_m \ell(Q)^{-m} \cdot m_2|Q_m$. What is $K(x,y)$? Of course it is not $t(x,y)= (x-y)^{-2}$. 

Let us consider
$$
K(x,y):=\begin{cases} 0, \,\text{if}\,\, x, y \in \text{the same}\,\, Q_i, i=1,\dots, M\,;\\
\frac{\ell(Q_i)^d}{(x-y)^2}\,,\,\text{if}\,\, y\in  Q_i, x\in Q_j, i\neq j\end{cases}
$$
It is really a simple calculation to see 

\begin{lemma}
\label{cz}
Let $\tau\in (0,1)$ be an arbitrary number, $s:=2-d$, $\e:=\min (1, \tau d)$. Then thus defined $K$ is a Calder\'on--Zygmund kernel
on $X$ with singularity $s$ and Calder\'on--Zygmund parameter $\e$.
\end{lemma}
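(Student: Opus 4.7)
The plan is to verify the three Calderón-Zygmund conditions by a case analysis based on which $Q_m$ contains each of the points involved, systematically exploiting the strong disjointness condition \eqref{D} to either rule out a case or to dominate the prefactor $\ell(Q_i)^d$ by the relevant distance. The size bound I) is immediate: when $y \in Q_i$ and $x \in Q_j$ with $i \ne j$, the condition $4Q_i \cap 4Q_j = \es$ forces $|x-y| \gtrsim \ell(Q_i)$, hence $|K(x,y)| = \ell(Q_i)^d/|x-y|^2 \lesssim |x-y|^{d-2} = |x-y|^{-s}$.

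For the smoothness in $x$ (condition II)), fix $x, x', y$ with $|x-x'| \le \tfrac12 |x-y|$. If $x \in Q_j$ and $x' \in Q_{j'}$ with $j \ne j'$, then \eqref{D} gives $|x-x'| \gtrsim \ell(Q_j)$, so if $y$ were to lie in $Q_j$ we would have $|x-y| \le \sqrt{2}\,\ell(Q_j)$, contradicting $|x-x'| \le |x-y|/2$; symmetrically $y \notin Q_{j'}$. Hence either $x, x'$ share a square, or $y$ lies in a third square $Q_i$ disjoint from both, and in either subcase the prefactors of $K(x,y)$ and $K(x',y)$ coincide. The identity $\frac{1}{(x-y)^2} - \frac{1}{(x'-y)^2} = \frac{(x'-x)(x+x'-2y)}{(x-y)^2(x'-y)^2}$, combined with $|x'-y| \ge |x-y|/2$ and $\ell(Q_i) \lesssim |x-y|$, yields
\[ |K(x,y) - K(x',y)| \lesssim \frac{\ell(Q_i)^d\,|x-x'|}{|x-y|^3} \lesssim \frac{|x-x'|}{|x-y|^{s+1}}, \]
which is II) with Lipschitz exponent $1$.

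For the smoothness in $y$ (III)), the same geometric obstruction eliminates the subcases $x \in Q_i$ and $x \in Q_{i'}$ when $y \in Q_i$, $y' \in Q_{i'}$, $i \ne i'$. When $x \in Q_j$ with $j, i, i'$ all distinct, we decompose
\[ K(x,y) - K(x,y') = \ell_i^d\Bigl[\tfrac{1}{(x-y)^2} - \tfrac{1}{(x-y')^2}\Bigr] + \frac{\ell_i^d - \ell_{i'}^d}{(x-y')^2}. \]
The first bracket is Lipschitz in $y$ by the same calculation as above. The real obstacle is the second term, where the prefactors $\ell_i^d$ and $\ell_{i'}^d$ disagree. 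Assuming WLOG $\ell_i \ge \ell_{i'}$, the disjointness $4Q_i \cap 4Q_{i'} = \es$ forces $\ell_i \lesssim |y-y'|$, so $|\ell_i^d - \ell_{i'}^d| \le \ell_i^d \lesssim |y-y'|^d$; together with $|x-y'| \ge |x-y|/2$, the second term is bounded by $C|y-y'|^d/|x-y|^2$. Writing
\[ \frac{|y-y'|^d}{|x-y|^2} = \Bigl(\frac{|y-y'|}{|x-y|}\Bigr)^{d-\e}\frac{|y-y'|^\e}{|x-y|^{s+\e}} \]
and using $|y-y'|/|x-y| \le 1/2$, this is $\lesssim |y-y'|^\e/|x-y|^{s+\e}$ whenever $\e \le d$. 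Since $\e = \min(1, \tau d) \le \tau d < d$ for $\tau \in (0,1)$, the estimate closes; it is precisely this mismatched-prefactor term that forces $\e$ strictly below $d$ and explains the stated choice of $\e$.
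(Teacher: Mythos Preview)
Your proof is correct and follows the same case-by-case strategy as the paper, using \eqref{D} to control $\ell(Q_i)$ by the relevant distances. Two small remarks. First, in III) you jump straight to $i\neq i'$ and never explicitly record the subcase $y,y'\in Q_i$, $x\in Q_j$, $j\neq i$ (the paper's case III.1); of course this is exactly your II) argument with matching prefactor $\ell(Q_i)^d$, so nothing new is needed. Second, in the ``all distinct'' case III.3 the paper does something slightly coarser than your telescoping split: it simply bounds $|K(x,y)-K(x,y')|\le |K(x,y)|+|K(x,y')|$ and uses that \emph{both} $\ell(Q_i)$ and $\ell(Q_{i'})$ are $\lesssim |y-y'|$ by \eqref{D}, arriving directly at $|y-y'|^d/|x-y|^2$ and then at $|y-y'|^{\tau d}/|x-y|^{s+\tau d}$. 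Your decomposition isolates a Lipschitz piece that is in fact already dominated by the mismatched-prefactor bound, so the paper's shortcut loses nothing.
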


\begin{proof}
It is trivial to see I) and II) by using disjointness condition \eqref{D}. To check III) one consider cases:

III.1) $y,y'$ are in the same square, $x$ is in a different square. This is easy again by \eqref{D}. If $x$ joins $y, y'$ in their square there is nothing to prove as $K$ becomes zero.

III.2) $x, y$ are in the same square but $y'$ is in a different one. Then \eqref{D} and $|y-y'|\le \frac12 |x-y|$ would imply that such situation is impossible. If  $x, y'$ are in the same square but $y$ is in a different one. Then  again \eqref{D} and $|y-y'|\le \frac12 |x-y|$ would imply that such situation is impossible.

III.3) $x, y, y'$ are all in different squares. Let $y\in Q_{i}$, $y'\in Q_{i'}$. As $|y-y'|\le \frac12 |x-y|$ and $\ell(Q_i)\le |y-y'|, \,\ell(Q_{i'} )\le |y-y'|$ (by \eqref{D}), 
we should think that squares $Q_i, Q_{i'}$ are small with respect to $|x-y|$. Here we do not estimate the difference, we estimate $|K(x, y)$ and $|K(x, y')|$ separately:
$$
|K(x,y)-K(x, y')| \le |K(x,y)|+ |K(x, y')| \le \frac{\ell(Q_i)^d +\ell(Q_{i'}^d}{|x-y|^2} \le $$
$$
C\frac{ |y-y'|^{\tau d}|y-y'|^{(1-\tau)d}}{|x-y|^2}
 \le 
C\, \frac{ |y-y'|^{\tau d}}{|x-y|^{2-d+\tau  d}}=C\, \frac{ |y-y'|^{\tau d}}{|x-y|^{s+\tau  d}}
$$

\end{proof}

\noindent{\bf Remark.}  The  integral operator with kernel $\int \frac1{(x-y)^2}...dm_2(y)$  from Theorem \ref{main}
 is exactly the sum of $\int K(x,y)...d\mu(y)$ (we call attention of the reader to the change of measure!) plus the local operator $\int t_0(x,y) ... d m_2(y)$, where 
$$
t_0(x,y) :=\sum_{m=1}^M\frac{\chi_{Q_m}(x)\chi_{Q_m}(y)}{(x-y)^2}\,.
$$
The boundedness of $t_0$ in each $L^2(Q_m, \ell(Q_m)^{-d}\,dm_2)$ (and, thus, in the direct sum of these spaces, which is precisely $L^2(X, \mu)$) is obvious as the constant $\ell(Q_m)^{-d}$ just cancels out in norm estimate in the left and right hand sides, and operator with kernel $\frac1{(x-y)^2}$ is a classical (called Ahlfors--Beurling) operator, which is an isometry in $L^2(m_2)$.

So, in principle, we could have checked assumptions (i), (ii) of our $T1$ theorem \ref{T1} and then Theorem \ref{main} would follow.

However, there is an even simpler way to see that the integral operator 
$$
\int K(x,y)...d\mu(y)
$$
is bounded in $L^2(\mu)$.

\

\bibliographystyle{amsplain}

\begin{thebibliography}{99}

\bibitem{A}{\sc K.  Astala}, {\em Area distortion of quasiconformal mappings,} Acta Math. 173 (1994), no. 1, 37-60.

\bibitem{HyMa} {\sc T. Hyt\"onen, H. Martikainen}, {\em Non-homogeneous $Tb$ theorem and random dyadic cubes on metric measure spaces}, arXiv: 0911.4387v1, pp. 1--34.

\bibitem{KV} {\sc S. Konyagin, A. Volberg}, {\em On measures with the doubling condition}, Izv. Akad. Nauk SSR, Ser. Mat. {\bf 51} (1987), no. 3, 666--675; translation in Math. USSR-Izv., {\bf 30} (1988), n0. 3, 629--638.

\bibitem{LSUT}{\sc M. Lacey, E. Sawyer, I. Uriarte-Tuero}, {\em Astala's conjecture on distortion of  Hausdorff measures under quasiconformal  maps in the plane}, arxiv 0805.4711v3, pp. 1--17.

\bibitem{NTV} {\sc F. Nazarov, S. Treil, A. Volberg}, {\em Cauchy integral and Calder\'on--Zygmund operators on nonhomogeneous spaces}, {\bf IMRN}, International Mathematics Research Notices, (1997), N0. 15, pp. 703--726.

\bibitem{NRV} {\sc F. Nazarov, A. Reznikov, A. Volberg}, {\em The proof of $A_2$ conjecture in a Geometrically Doubling Metric Space}, preprint, 2011, pp. 1--24.

\bibitem{VW} {\sc  Alexander Volberg, Brett D. Wick}, {\em Bergman-type Singular Operators and the Characterization of Carleson Measures for Besov--Sobolev Spaces on the Complex Ball }, arXiv:0910.1142, to appear in Amer. J. Math.

\end{thebibliography}

\end{document}